\title{Using parameter elimination to solve discrete linear Chebyshev approximation problems
\thanks{Mathematics, 2020, 8(12), 2210, doi:10.3390/math8122210}}
\author{N. Krivulin\thanks{Faculty of Mathematics and Mechanics, Saint Petersburg State University, 28 Universitetsky Ave., St.~Petersburg, 198504, Russia, 
nkk@math.spbu.ru.}
\thanks{This work was supported in part by the Russian Foundation for Basic Research (grant No. 20-010-00145).}}
\date{}
\newtheorem{theorem}{Theorem}
\newtheorem{lemma}[theorem]{Lemma}
\newtheorem{proposition}[theorem]{Proposition}
\theoremstyle{definition}
\newtheorem{example}{Example}
\begin{document}

\maketitle

\begin{abstract}
We consider discrete linear Chebyshev approximation problems in which the unknown parameters of linear function are fitted by minimizing the maximum absolute deviation of errors. Such problems find application in the solution of overdetermined systems of linear equations that appear in many practical contexts. The least maximum absolute deviation estimator is used in regression analysis in statistics when the distribution of errors has bounded support. To derive a direct solution of the problem, we propose an algebraic approach based on a parameter elimination technique. As a key component of the approach, an elimination lemma is proved to handle the problem by reducing it to a problem with one parameter eliminated, together with a box constraint imposed on this parameter. We demonstrate the application of the lemma to the direct solution of linear regression problems with one and two parameters. We develop a procedure to solve multidimensional approximation (multiple linear regression) problems in a finite number of steps. The procedure follows a method that comprises two phases: backward elimination and forward substitution of parameters. We describe the main components of the procedure and estimate its computational complexity. We implement symbolic computations in MATLAB to obtain exact solutions for two numerical examples.
\\

\textbf{Key-Words:} discrete linear Chebyshev approximation, minimax problem, variable elimination, direct solution, multiple linear regression, least maximum absolute deviation estimator.
\\

\textbf{MSC (2010):} 41A50, 90C47, 62J05
\end{abstract}

\section{Introduction}

Discrete linear Chebyshev (minimax) approximation problems where the errors of fitting the unknown parameters are measured by the Chebyshev (max, infinity, uniform or $L_{\infty}$) norm are of theoretical interest and practical importance in many areas of science and engineering. Application of the Chebyshev norm leads to the least maximum absolute deviation of errors as the approximation criterion, and dates back to Laplace's classical work \cite{Laplace1832Mecanique} (book~3, chap.~V, \S39) (see also \cite{Harter1975MethodI,Steffens2006History}). 

An important area of applications of the discrete linear Chebyshev approximation is the solution of overdetermined systems of linear equations \cite{Tewarson1972Minimax,Appa1973L1,Pinar2009Overdetermined} that appear in many practical contexts. The least maximum absolute deviation estimator is widely used in regression analysis in statistics when the distribution of errors has bounded support. Specifically, the Chebyshev estimator is known to be a maximum likelihood estimator if the error distribution is uniform \cite{Rabinowitz1968Applications,Appa1973L1,Hand1978Aspects,Kennedy1980Statistical}. Moreover, this estimator can be useful even if errors are not uniform, but controlled in some way, and small, relative to the observed values. Examples of applications include problems in nuclear physics \cite{James1983Fitting,Bertsch2005Fitting}, parameter estimation of dynamic systems \cite{Makila1991Robust,Akcay1996Choice}, statistical machine learning \cite{Scholkopf2002Learning,Bartoszuk2016Fitting}, and finance \cite{Jaschke1997Arbitrage}.

To solve the Chebyshev approximation problem, a number of approaches are known which apply various iterative computational procedures to find numerical solutions (see a comprehensive overview of the algorithmic solutions given by \cite{Harter1975MethodIII,Harter1975MethodIV,Hand1978Aspects,Spath1992Mathematical}). For instance, the approximation problems under consideration can be reduced to linear programs and then solved numerically by computational algorithms available in the linear programming, such as the simplex algorithm and its variations. For linear programming solutions and other related algorithms, one can consult early works \cite{Wagner1959Linear,Stiefel1960Note,Rabinowitz1968Applications,Tewarson1972Minimax,Appa1973L1,Watson1973Best,Sposito1976Minimizing,Armstrong1979Algorithm}, as well as more recent publications \cite{Kim2000Algorithms,Boyd2004Convex,Castillo2008Dealing,Ene2019Improved}.

Along with existing iterative algorithms that find use in applications, direct analytical solutions of the linear Chebyshev approximation problem are also of interest as an essential instrument of formal analysis and treatment of the problem. A useful algebraic approach to derive direct solutions of problems that involve minimizing the Chebyshev distance is proposed in \cite{Krivulin2014Complete,Krivulin2018Algebraic,Krivulin2020Algebraic}. The approach offers complete solutions of the problems in the framework of tropical (idempotent) algebra, which deals with algebraic systems with idempotent operations. The solutions obtained in terms of tropical algebra are then represented in the usual form, ready for computation.

In this paper, we reshape and adjust algebraic techniques implemented in the above-mentioned approach to develop a direct solution of the discrete linear Chebyshev approximation problem in terms of conventional algebra. As a key component of the proposed method, an elimination lemma is proved that allows us to handle the problem by reducing it to a problem with one unknown parameter eliminated and a box constraint imposed on this parameter. To provide illuminating but not cumbersome examples of the application of the lemma, we derive direct solutions of problems of low dimension, formulated as linear regression problems with one and two parameters.

Furthermore, we construct a procedure to solve multidimensional approximation (multiple linear regression) problems. The procedure is based on a direct solution method that comprises two phases: backward elimination and forward determination (substitution) of the unknown parameters. The~direct solution can supplement and complement the existing iterative procedures and becomes of particular interest when, for one reason or another, the use of iterative algorithms appears to be inappropriate or inadequate. We estimate computational complexity and memory requirements of the procedure, and implement symbolic computations in the MATLAB environment to obtain exact solutions for two illustrative numerical examples.

The rest of the paper is organized as follows. In Section~\ref{S-LCAP}, we formulate the approximation problem of interest in both scalar and vector form. Section~\ref{S-EL} presents the main result, which offers a reduction step to separate the problem into a problem of lower dimension by eliminating an unknown parameter, and a box constraint for this parameter. We apply the obtained result in Section~\ref{S-SOTPRP} to derive direct explicit solutions for linear regression problems with one and two unknown parameters. In Section~\ref{S-GSP}, we describe a computational procedure to solve linear approximation problems of arbitrary dimension and discuss its computational complexity. In Section~\ref{S-SINE}, software implementation is discussed and numerical examples are given. Section~\ref{S-C} presents concluding remarks.

\section{Linear Chebyshev Approximation Problem}
\label{S-LCAP}

We start with an appropriate notation, preliminary assumptions, and formal representation of the discrete linear Chebyshev approximation problem under study. 

Suppose that, given $X_{ij},Y_{i}\in\mathbb{R}$ for all $i=1,\ldots,M$ and $j=1,\ldots,N$, where $M$ and $N$ are positive integers, we need to find the unknown parameters $\theta_{j}\in\mathbb{R}$ for all $j=1,\ldots,N$ that solve the minimax~problem
\begin{equation}
\begin{aligned}
&&
\min_{\theta_{1},\ldots,\theta_{N}}
&&&
\max_{1\leq i\leq M}
\left|
\sum_{j=1}^{N}
X_{ij}\theta_{j}-Y_{i}
\right|.
\end{aligned}
\label{P-mintheta1thetaN}
\end{equation}

Without loss of generality, we assume that for each $j=1,\ldots,N$, there exists at least one $i$, such that $X_{ij}\ne0$. Otherwise, if $X_{ij}=0$ for some $j$ and all $i$, the parameter $\theta_{j}$ does not affect the objective function, and thus can be removed. 

Note that we can represent problem \eqref{P-mintheta1thetaN} in vector form by introducing the matrix and column~vectors
\begin{equation*}
\bm{X}
=
(X_{ij}),
\qquad
\bm{Y}
=
(Y_{i}),
\qquad
\bm{\theta}
=
(\theta_{j}).
\end{equation*}

With this matrix-vector notation and the Chebyshev norm defined for any vector $\bm{V}=(V_{i})$ as
\begin{equation*}
\|\bm{V}\|
=
\max_{i}|V_{i}|,
\end{equation*} 
the approximation problem takes the form
\begin{equation*}
\begin{aligned}
&&
\min_{\bm{\theta}}
&&&
\|\bm{X}\bm{\theta}-\bm{Y}\|.
\end{aligned}
\end{equation*}

To solve problem \eqref{P-mintheta1thetaN}, we first show that the problem can be reduced to a problem of the same form, but with one unknown parameter fewer.

\section{Elimination lemma}
\label{S-EL}

The next result offers a reduction approach to the problem, which provides the basis for the proposed solution.
\begin{lemma}
\label{L-mintheta1thetaN}
Solving problem \eqref{P-mintheta1thetaN} is equivalent to solving the problem
\begin{equation}
\begin{aligned}
&&
\min_{\theta_{1},\ldots,\theta_{N-1}}
&&
\max_{\substack{1\leq i<k\leq M\\|X_{iN}|+|X_{kN}|\ne0}}
\left|
\sum_{j=1}^{N-1}
\frac{X_{ij}X_{kN}-X_{kj}X_{iN}}{|X_{iN}|+|X_{kN}|}\theta_{j}
-
\frac{Y_{i}X_{kN}-Y_{k}X_{iN}}{|X_{iN}|+|X_{kN}|}
\right|
\end{aligned}
\label{P-mintheta1thetaN1}
\end{equation}
together with the inequality
\begin{multline}
\max_{\substack{1\leq i\leq M\\X_{iN}\ne0}}
\left(
-
\frac{\mu}{|X_{iN}|}
-
\sum_{j=1}^{N-1}
\frac{X_{ij}}{X_{iN}}\theta_{j}+\frac{Y_{i}}{X_{iN}}
\right)
\leq
\theta_{N}
\\\leq
\min_{\substack{1\leq i\leq M\\X_{iN}\ne0}}
\left(
\frac{\mu}{|X_{iN}|}
-
\sum_{j=1}^{N-1}
\frac{X_{ij}}{X_{iN}}\theta_{j}+\frac{Y_{i}}{X_{iN}}
\right),
\label{I-thetaN}
\end{multline}
where $\mu$ is the minimum of the objective function in problem \eqref{P-mintheta1thetaN1}, and the empty sums are defined to be zero.
\end{lemma}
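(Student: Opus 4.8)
The plan is to fix the first $N-1$ parameters, regard the objective of \eqref{P-mintheta1thetaN} as a function of the single remaining variable $\theta_{N}$, compute its minimum in closed form, and then recognise that minimum as the objective of \eqref{P-mintheta1thetaN1}. To this end, write $R_{i}=\sum_{j=1}^{N-1}X_{ij}\theta_{j}-Y_{i}$ for $i=1,\dots,M$, so that the $i$-th absolute deviation in \eqref{P-mintheta1thetaN} equals $|R_{i}+X_{iN}\theta_{N}|$ and the objective becomes $g(\theta_{N})=\max_{1\le i\le M}|R_{i}+X_{iN}\theta_{N}|$. Since, by the standing assumption, $X_{iN}\ne0$ for at least one $i$, the function $g$ is convex, piecewise linear and coercive, hence attains a minimum. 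The first half of the argument will be to show that
\[
\min_{\theta_{N}}g(\theta_{N})
=
\nu
:=
\max_{\substack{1\le i<k\le M\\|X_{iN}|+|X_{kN}|\ne0}}
\frac{|X_{kN}R_{i}-X_{iN}R_{k}|}{|X_{iN}|+|X_{kN}|},
\]
and then to observe that $X_{kN}R_{i}-X_{iN}R_{k}=\sum_{j=1}^{N-1}(X_{ij}X_{kN}-X_{kj}X_{iN})\theta_{j}-(Y_{i}X_{kN}-Y_{k}X_{iN})$, which makes $\nu$ literally the objective of \eqref{P-mintheta1thetaN1}.

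For the lower bound $g(\theta_{N})\ge\nu$, I would fix a pair $i<k$ with $|X_{iN}|+|X_{kN}|\ne0$ and multiply the estimates $g(\theta_{N})\ge|R_{i}+X_{iN}\theta_{N}|$ and $g(\theta_{N})\ge|R_{k}+X_{kN}\theta_{N}|$ by $|X_{kN}|$ and $|X_{iN}|$ respectively, then add and apply the triangle inequality:
\begin{multline*}
(|X_{iN}|+|X_{kN}|)\,g(\theta_{N})
\ge
|X_{kN}|\,|R_{i}+X_{iN}\theta_{N}|+|X_{iN}|\,|R_{k}+X_{kN}\theta_{N}|
\\
\ge
|X_{kN}(R_{i}+X_{iN}\theta_{N})-X_{iN}(R_{k}+X_{kN}\theta_{N})|
=
|X_{kN}R_{i}-X_{iN}R_{k}|,
\end{multline*}
the point being that the $\theta_{N}$ terms cancel. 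Dividing by $|X_{iN}|+|X_{kN}|$ and taking the maximum over all admissible pairs gives $g(\theta_{N})\ge\nu$ for every $\theta_{N}$.

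For the matching upper bound it suffices to exhibit some $\theta_{N}$ with $g(\theta_{N})\le\nu$, i.e.\ with $|R_{i}+X_{iN}\theta_{N}|\le\nu$ for all $i$. For an index with $X_{iN}=0$ this reads $|R_{i}|\le\nu$, which holds because pairing it with any $k$ for which $X_{kN}\ne0$ (one exists by the standing assumption) already gives $|X_{kN}R_{i}|/|X_{kN}|=|R_{i}|\le\nu$. For an index with $X_{iN}\ne0$, dividing $-\nu\le R_{i}+X_{iN}\theta_{N}\le\nu$ by $X_{iN}$, and reversing the inequalities when $X_{iN}<0$, turns it in either sign case into the box
\[
-\frac{\nu}{|X_{iN}|}-\sum_{j=1}^{N-1}\frac{X_{ij}}{X_{iN}}\theta_{j}+\frac{Y_{i}}{X_{iN}}
\le
\theta_{N}
\le
\frac{\nu}{|X_{iN}|}-\sum_{j=1}^{N-1}\frac{X_{ij}}{X_{iN}}\theta_{j}+\frac{Y_{i}}{X_{iN}}.
\]
A common $\theta_{N}$ lying in all these boxes exists precisely when the largest lower endpoint does not exceed the smallest upper endpoint, that is, when $-\nu/|X_{iN}|-R_{i}/X_{iN}\le\nu/|X_{kN}|-R_{k}/X_{kN}$ for all admissible $i,k$; taking this together with the same inequality after swapping $i$ and $k$ and clearing denominators, one checks it amounts exactly to $|X_{kN}R_{i}-X_{iN}R_{k}|\le\nu(|X_{iN}|+|X_{kN}|)$, which is true by the definition of $\nu$. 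Hence such a $\theta_{N}$ exists and $\min_{\theta_{N}}g(\theta_{N})=\nu$. Minimising further over $\theta_{1},\dots,\theta_{N-1}$ then identifies $\min_{\theta_{1},\dots,\theta_{N}}g$ with the optimal value $\mu$ of \eqref{P-mintheta1thetaN1}, attained at the same $\theta_{1},\dots,\theta_{N-1}$; and for such an optimal choice the admissible $\theta_{N}$ are exactly those with $g(\theta_{N})\le\mu$, which by the box computation above is the interval \eqref{I-thetaN}. This gives the claimed equivalence.

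The step I expect to require the most care is the sign bookkeeping: checking that dividing the two-sided inequality by $X_{iN}$ yields the same box endpoints (written with $|X_{iN}|$) in both the positive and negative cases, and that the pairwise comparisons over $(i,k)$ and $(k,i)$ recombine into a single absolute-value inequality. Closely related is the handling of the degenerate rows $X_{iN}=0$: they contribute no constraint on $\theta_{N}$ but must be absorbed into the pairwise maximum in \eqref{P-mintheta1thetaN1}, and it is exactly here that the nondegeneracy assumption on the columns of $\bm{X}$ is used. The triangle-inequality identity driving the lower bound is the only genuinely clever ingredient, but it is essentially dictated by the form of the target expression in \eqref{P-mintheta1thetaN1}.
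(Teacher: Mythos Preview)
Your argument is correct and arrives at the same destination as the paper, but the route is organised differently. The paper introduces an auxiliary variable $\lambda$, rewrites \eqref{P-mintheta1thetaN} as a linear program $\min\lambda$ subject to $|\sum_{j}X_{ij}\theta_{j}-Y_{i}|\le\lambda$, and then performs a Fourier--Motzkin style elimination of $\theta_{N}$: the inequalities with $X_{iN}\ne0$ are solved for $\theta_{N}$, and the resulting double inequality is feasible iff every lower bound does not exceed every upper bound, which after algebraic manipulation produces the objective of \eqref{P-mintheta1thetaN1}. You instead fix $\theta_{1},\dots,\theta_{N-1}$, treat the objective as a one-variable function $g(\theta_{N})$, and compute $\min_{\theta_{N}}g$ directly: the lower bound comes from a weighted triangle inequality (your key identity $(|X_{iN}|+|X_{kN}|)\,g\ge|X_{kN}R_{i}-X_{iN}R_{k}|$), and the upper bound from the same box-intersection argument the paper uses. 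Your approach is a bit more elementary---no auxiliary $\lambda$, and the lower bound is obtained in one line rather than through a chain of inequality rearrangements---while the paper's LP/elimination framing makes the recursive structure (and hence the general procedure of Section~\ref{S-GSP}) more transparent. The sign bookkeeping and the handling of rows with $X_{iN}=0$ that you flag as delicate are exactly the places where the paper also spends effort, and your treatment of them is sound.
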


\begin{proof}
To examine problem \eqref{P-mintheta1thetaN}, we first introduce an auxiliary unknown parameter $\lambda$ to rewrite the problem as follows:
\begin{equation*}
\begin{aligned}
&&
\min_{\theta_{1},\ldots,\theta_{N}}
&&&
\lambda,
\\
&&
\text{s.t.}
&&&
\max_{1\leq i\leq M}
\left|
\sum_{j=1}^{N}
X_{ij}\theta_{j}-Y_{i}
\right|
\leq
\lambda.
\end{aligned}
\end{equation*}

Note that the inequality constraint is readily represented as the system of inequalities
\begin{align*}
\lambda
&\geq
-
\sum_{j=1}^{N}
X_{ij}\theta_{j}+Y_{i},
\\
\lambda
&\geq
\mathbin{\phantom{-}}
\sum_{j=1}^{N}
X_{ij}\theta_{j}-Y_{i},
\qquad
i=1,\ldots,M;
\end{align*}
which, in particular, puts the problem into the form of a linear program.

Next, we continue rearrangement by solving for $\theta_{N}$ those inequalities in which $X_{iN}\ne0$ to write
\begin{align*}
-\theta_{N}
&\geq
\mathbin{\phantom{-}}
\frac{\lambda}{X_{iN}}
+
\sum_{j=1}^{N-1}
\frac{X_{ij}}{X_{iN}}\theta_{j}-\frac{Y_{i}}{X_{iN}},
\\
\theta_{N}
&\geq
\mathbin{\phantom{-}}
\frac{\lambda}{X_{iN}}
-
\sum_{j=1}^{N-1}
\frac{X_{ij}}{X_{iN}}\theta_{j}+\frac{Y_{i}}{X_{iN}},
\qquad
X_{iN}<0;
\\
\theta_{N}
&\geq
-
\frac{\lambda}{X_{iN}}
-
\sum_{j=1}^{N-1}
\frac{X_{ij}}{X_{iN}}\theta_{j}+\frac{Y_{i}}{X_{iN}},
\\
-\theta_{N}
&\geq
-
\frac{\lambda}{X_{iN}}
+
\sum_{j=1}^{N-1}
\frac{X_{ij}}{X_{iN}}\theta_{j}-\frac{Y_{i}}{X_{iN}},
\qquad
X_{iN}>0;
\qquad
i=1,\ldots,M.
\end{align*}

Coupling the inequalities with common left-hand sides and adding the inequalities for $X_{iN}=0$ yield
\begin{align*}
-\theta_{N}
&\geq
-\frac{\lambda}{|X_{iN}|}
+
\sum_{j=1}^{N-1}
\frac{X_{ij}}{X_{iN}}\theta_{j}-\frac{Y_{i}}{X_{iN}},
\\
\theta_{N}
&\geq
-\frac{\lambda}{|X_{iN}|}
-
\sum_{j=1}^{N-1}
\frac{X_{ij}}{X_{iN}}\theta_{j}+\frac{Y_{i}}{X_{iN}},
\qquad
X_{iN}\ne0;
\\
\lambda
&\geq
\left|
\sum_{j=1}^{N-1}
X_{ij}\theta_{j}-Y_{i}
\right|,
\qquad
X_{iN}=0;
\qquad
i=1,\ldots,M.
\end{align*}

By combining these inequalities for all $i=1,\ldots,M$, we obtain
\begin{equation}
\begin{aligned}
-\theta_{N}
&\geq
\max_{\substack{1\leq i\leq M\\X_{iN}\ne0}}
\left(
-
\frac{\lambda}{|X_{iN}|}
+
\sum_{j=1}^{N-1}
\frac{X_{ij}}{X_{iN}}\theta_{j}-\frac{Y_{i}}{X_{iN}}
\right),
\\
\theta_{N}
&\geq
\max_{\substack{1\leq i\leq M\\X_{iN}\ne0}}
\left(
-
\frac{\lambda}{|X_{iN}|}
-
\sum_{j=1}^{N-1}
\frac{X_{ij}}{X_{iN}}\theta_{j}+\frac{Y_{i}}{X_{iN}}
\right),
\\
\lambda
&\geq
\max_{\substack{1\leq i\leq M\\X_{iN}=0}}
\left|
\sum_{j=1}^{N-1}
X_{ij}\theta_{j}-Y_{i}
\right|.
\end{aligned}
\label{E-thetaN-lambda}
\end{equation}

The first two inequalities at \eqref{E-thetaN-lambda} result in the double inequality
\begin{multline*}
\max_{\substack{1\leq i\leq M\\X_{iN}\ne0}}
\left(
-
\frac{\lambda}{|X_{iN}|}
-
\sum_{j=1}^{N-1}
\frac{X_{ij}}{X_{iN}}\theta_{j}+\frac{Y_{i}}{X_{iN}}
\right)
\leq
\theta_{N}
\\\leq
-
\max_{\substack{1\leq i\leq M\\X_{iN}\ne0}}
\left(
-
\frac{\lambda}{|X_{iN}|}
+
\sum_{j=1}^{N-1}
\frac{X_{ij}}{X_{iN}}\theta_{j}-\frac{Y_{i}}{X_{iN}}
\right).
\end{multline*}

After replacing $\lambda$ by $\mu$ that denotes the minimum of the objective function and using the equality $\max(a,b)=-\min(-a,-b)$ to change from $\max$ to $\min$ in the right-hand side, the double inequality takes the form of \eqref{I-thetaN}.

The above double inequality defines a nonempty set of values for the unknown $\theta_{N}$ if, and only if, the following condition holds:
\begin{multline*}
\max_{\substack{1\leq i\leq M\\X_{iN}\ne0}}
\left(
-
\frac{\lambda}{|X_{iN}|}
-
\sum_{j=1}^{N-1}
\frac{X_{ij}}{X_{iN}}\theta_{j}+\frac{Y_{i}}{X_{iN}}
\right)
\\+
\max_{\substack{1\leq i\leq M\\X_{iN}\ne0}}
\left(
-
\frac{\lambda}{|X_{iN}|}
+
\sum_{j=1}^{N-1}
\frac{X_{ij}}{X_{iN}}\theta_{j}-\frac{Y_{i}}{X_{iN}}
\right)
\leq
0,
\end{multline*}
which is readily rearranged in the form of the inequality
\begin{multline*}
\max_{\substack{1\leq i,k\leq M\\X_{iN},X_{kN}\ne0}}
\left(
-
\frac{|X_{iN}|+|X_{kN}|}{|X_{iN}||X_{kN}|}\lambda
\right.
\\-
\sum_{j=1}^{N-1}
\frac{X_{ij}X_{kN}-X_{kj}X_{iN}}{X_{iN}X_{kN}}\theta_{j}
+
\left.
\frac{Y_{i}X_{kN}-Y_{k}X_{iN}}{X_{iN}X_{kN}}
\right)
\leq
0.
\end{multline*}

This inequality is equivalent to the system of inequalities
\begin{multline*}
-
\frac{|X_{iN}|+|X_{kN}|}{|X_{iN}||X_{kN}|}\lambda
-
\sum_{j=1}^{N-1}
\frac{X_{ij}X_{kN}-X_{kj}X_{iN}}{X_{iN}X_{kN}}\theta_{j}
+
\frac{Y_{i}X_{kN}-Y_{k}X_{iN}}{X_{iN}X_{kN}}
\leq
0,
\\
X_{iN},X_{kN}\ne0;
\qquad
1\leq i,k\leq M.
\end{multline*}

By solving these inequalities for $\lambda$, we obtain the system
\begin{multline*}
\lambda
\geq
-
\frac{|X_{iN}||X_{kN}|}{X_{iN}X_{kN}}
\left(
\sum_{j=1}^{N-1}
\frac{X_{ij}X_{kN}-X_{kj}X_{iN}}{|X_{iN}|+|X_{kN}|}\theta_{j}
-
\frac{Y_{i}X_{kN}-Y_{k}X_{iN}}{|X_{iN}|+|X_{kN}|}
\right),
\\
X_{iN},X_{kN}\ne0;
\qquad
1\leq i,k\leq M.
\end{multline*}

We now note that interchanging the indices $i$ and $k$ in the differences
\begin{equation*}
X_{ij}X_{kN}-X_{kj}X_{iN},
\qquad
Y_{i}X_{kN}-Y_{k}X_{iN}
\end{equation*}
changes the sign of these differences, and hence, the sign of the entire right-hand side of each inequality in the system. As a result, for every pair of indices $i$ and $k$, the system includes both the inequality
\begin{equation*}
\lambda
\geq
-
\frac{|X_{iN}||X_{kN}|}{X_{iN}X_{kN}}
\left(
\sum_{j=1}^{N-1}
\frac{X_{ij}X_{kN}-X_{kj}X_{iN}}{|X_{iN}|+|X_{kN}|}\theta_{j}
-
\frac{Y_{i}X_{kN}-Y_{k}X_{iN}}{|X_{iN}|+|X_{kN}|}
\right),
\end{equation*}
and the inequality 
\begin{multline*}
\lambda
\geq
-
\frac{|X_{kN}||X_{iN}|}{X_{kN}X_{iN}}
\left(
\sum_{j=1}^{N-1}
\frac{X_{kj}X_{iN}-X_{ij}X_{kN}}{|X_{kN}|+|X_{iN}|}\theta_{j}
-
\frac{Y_{k}X_{iN}-Y_{i}X_{kN}}{|X_{kN}|+|X_{iN}|}
\right)
\\=
\frac{|X_{iN}||X_{kN}|}{X_{iN}X_{kN}}
\left(
\sum_{j=1}^{N-1}
\frac{X_{ij}X_{kN}-X_{kj}X_{iN}}{|X_{iN}|+|X_{kN}|}\theta_{j}
-
\frac{Y_{i}X_{kN}-Y_{k}X_{iN}}{|X_{iN}|+|X_{kN}|}
\right).
\end{multline*}

After coupling the paired inequalities and considering that the equality $|X_{iN}X_{kN}|=|X_{iN}||X_{kN}|$ is valid, we rearrange the system as follows:
\begin{multline*}
\lambda
\geq
\left|
\sum_{j=1}^{N-1}
\frac{X_{ij}X_{kN}-X_{kj}X_{iN}}{|X_{iN}|+|X_{kN}|}\theta_{j}
-
\frac{Y_{i}X_{kN}-Y_{k}X_{iN}}{|X_{iN}|+|X_{kN}|}
\right|,
\\
X_{iN},X_{kN}\ne0;
\qquad
1\leq i,k\leq M.
\end{multline*}

Furthermore, we combine the inequalities for all $1\leq i,k\leq M$ and add the last inequality at \eqref{E-thetaN-lambda} to replace the condition $X_{iN},X_{kN}\ne0$ by that in the form $|X_{iN}|+|X_{kN}|\ne0$ and rewrite the system as one inequality
\begin{equation*}
\lambda
\geq
\max_{\substack{1\leq i,k\leq M\\|X_{iN}|+|X_{kN}|\ne0}}
\left|
\sum_{j=1}^{N-1}
\frac{X_{ij}X_{kN}-X_{kj}X_{iN}}{|X_{iN}|+|X_{kN}|}\theta_{j}
-
\frac{Y_{i}X_{kN}-Y_{k}X_{iN}}{|X_{iN}|+|X_{kN}|}
\right|.
\end{equation*}

We now observe that the term under the max operator is invariant under permutation of the indices $i$ and $k$, and is equal to zero if $i=k$. Therefore, we can reduce the set of indices defined as $1\leq i,k\leq M$ by that given by the condition $1\leq i<k\leq M$ and represent the lower bound on $\lambda$ as
\begin{equation*}
\lambda
\geq
\max_{\substack{1\leq i<k\leq M\\|X_{iN}|+|X_{kN}|\ne0}}
\left|
\sum_{j=1}^{N-1}
\frac{X_{ij}X_{kN}-X_{kj}X_{iN}}{|X_{iN}|+|X_{kN}|}\theta_{j}
-
\frac{Y_{i}X_{kN}-Y_{k}X_{iN}}{|X_{iN}|+|X_{kN}|}
\right|.
\end{equation*}

Since the minimum of $\lambda$ is bounded from below by the expression on the right-hand side, we need to find the minimum of this expression with respect to $\theta_{1},\ldots,\theta_{N-1}$, which leads to solving problem~\eqref{P-mintheta1thetaN1}.
\end{proof}

To conclude this section, we note that the reduced problem at \eqref{P-mintheta1thetaN1} has the same general form as problem \eqref{P-mintheta1thetaN} with the parameter $\theta_{N}$ eliminated. This offers a potential for solving the problem under study by recurrent implementation of lemma~\ref{L-mintheta1thetaN}. We discuss application of the lemma to derive direct solutions of problems of low dimension and to develop a recursive procedure to solve problems of arbitrary dimension in what follows.

\section{Solution of One- and Two-Parameter Regression Problems}
\label{S-SOTPRP}

We now apply the obtained result to derive direct, exact solutions to regression problems with one and two parameters. These solutions can be directly extended to problems with more parameters, which leads, however, to more complicated and bulky expressions, not presented here to save space. 

We start with one-parameter simple linear regression problems, which have well-known solutions, and then find a complete solution for a two-parameter linear regression problem. 

\subsection{One-Parameter Linear Regression Problems}

Let us suppose that, for given explanatory (independent) variables $X_{i}\in\mathbb{R}$ and response (dependent) variables $Y_{i}\in\mathbb{R}$, where $i=1,\ldots,M$, we find the unknown regression parameter $\theta\in\mathbb{R}$ that achieves the minimum
\begin{equation}
\begin{aligned}
&&
\min_{\theta}
&&&
\max_{1\leq i\leq M}|X_{i}\theta-Y_{i}|.
\end{aligned}
\label{P-minmaxXitheta1Yi}
\end{equation}

To solve the problem, we directly apply lemma~\ref{L-mintheta1thetaN} with $N=1$. Elimination of the empty sums in \eqref{P-mintheta1thetaN1} and \eqref{I-thetaN}, and substitution $X_{i1}=X_{i}$ for all $i=1,\ldots,M$ and $\theta_{1}=\theta$ yield the next results.
\begin{proposition}
\label{O-minmaxXitheta1Yi}
The minimum error in problem \eqref{P-minmaxXitheta1Yi} is equal to
\begin{equation*}
\mu
=
\max_{\substack{1\leq i<k\leq M\\|X_{i}|+|X_{k}|\ne0}}
\frac{|Y_{i}X_{k}-Y_{k}X_{i}|}{|X_{i}|+|X_{k}|},
\end{equation*}
and all solutions of the problem are given by the condition 
\begin{equation*}
\max_{\substack{1\leq i\leq M\\X_{i}\ne0}}
\left(
-
\frac{\mu}{|X_{i}|}
+
\frac{Y_{i}}{X_{i}}
\right)
\leq
\theta_{}
\leq
\min_{\substack{1\leq i\leq M\\X_{i}\ne0}}
\left(
\frac{\mu}{|X_{i}|}
+
\frac{Y_{i}}{X_{i}}
\right).
\end{equation*}
\end{proposition}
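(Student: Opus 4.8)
The plan is to obtain Proposition~\ref{O-minmaxXitheta1Yi} as the $N=1$ instance of Lemma~\ref{L-mintheta1thetaN}, so the argument is a specialization rather than a fresh derivation. First I would set $N=1$ in the lemma and carry out the substitutions $X_{i1}=X_{i}$ and $\theta_{1}=\theta$ for all $i=1,\dots,M$, noting that every sum $\sum_{j=1}^{N-1}$ then runs over an empty index set and hence vanishes by the convention stated in the lemma.

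Next I would examine the reduced problem~\eqref{P-mintheta1thetaN1} in this regime. With $N=1$ it carries no remaining parameters $\theta_{1},\dots,\theta_{N-1}$, so it is the minimization of a quantity that depends on no decision variable; its minimum $\mu$ is therefore simply the value of that objective, namely $\max_{1\le i<k\le M,\ |X_{i}|+|X_{k}|\ne0}|Y_{i}X_{k}-Y_{k}X_{i}|/(|X_{i}|+|X_{k}|)$, which is exactly the claimed formula for the minimum error. Here I would record that the index set $\{(i,k):1\le i<k\le M,\ |X_{i}|+|X_{k}|\ne0\}$ is nonempty: the standing non-degeneracy assumption of Section~\ref{S-LCAP} (for each $j$ there is an $i$ with $X_{ij}\ne0$), applied with $j=N=1$, gives at least one index $i$ with $X_{i}\ne0$, which also makes the maximum well defined and finite.

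Finally I would specialize the box constraint~\eqref{I-thetaN}. After deleting the empty sums and applying the same substitutions, its two sides become $\max_{1\le i\le M,\ X_{i}\ne0}\left(-\mu/|X_{i}|+Y_{i}/X_{i}\right)$ and $\min_{1\le i\le M,\ X_{i}\ne0}\left(\mu/|X_{i}|+Y_{i}/X_{i}\right)$, so \eqref{I-thetaN} reduces precisely to the double inequality in the statement. By Lemma~\ref{L-mintheta1thetaN}, a value $\theta$ solves \eqref{P-minmaxXitheta1Yi} if and only if it satisfies this double inequality (the condition of being optimal for~\eqref{P-mintheta1thetaN1} being vacuous when there are no parameters left), which is the assertion of the proposition; the lemma moreover guarantees that the described interval is nonempty, equivalently that $\mu$ was chosen to make it so, and the interval is bounded since some $X_{i}\ne0$.

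The step I expect to require the most care is the second one: making precise that, for $N=1$, problem~\eqref{P-mintheta1thetaN1} is a minimization over no variables and that its optimal value is consequently the constant value of the objective. This is conceptually trivial but is the one place where the general statement of the lemma must be read in a degenerate regime, and it deserves to be spelled out so that the resulting expression for $\mu$ is not mistaken for something still awaiting optimization.
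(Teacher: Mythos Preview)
Your proposal is correct and follows essentially the same route as the paper: the paper's proof is simply to apply Lemma~\ref{L-mintheta1thetaN} with $N=1$, drop the empty sums in \eqref{P-mintheta1thetaN1} and \eqref{I-thetaN}, and substitute $X_{i1}=X_{i}$, $\theta_{1}=\theta$. Your version merely spells out more carefully that the reduced problem \eqref{P-mintheta1thetaN1} is a minimization over no variables and hence yields $\mu$ directly, and checks well-definedness via the standing assumption of Section~\ref{S-LCAP}; these are welcome clarifications but not a different approach.
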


We now consider a special case of problem \eqref{P-minmaxXitheta1Yi} in the form
\begin{equation}
\begin{aligned}
&&
\min_{\theta}
&&&
\max_{1\leq i\leq M}|\theta-Y_{i}|.
\end{aligned}
\label{P-minmaxtheta1Yi}
\end{equation}

To handle the problem, we set $X_{i}=1$ for all $i=1,\ldots,M$ in the expressions obtained in proposition~\ref{O-minmaxXitheta1Yi}. Since $|X_{i}|+|X_{k}|=2\ne0$, the minimum error in problem \eqref{P-minmaxtheta1Yi} becomes
\begin{equation*}
\mu
=
\max_{1\leq i<k\leq M}
|Y_{i}-Y_{k}|/2
=
\max_{1\leq i,k\leq M}
|Y_{i}-Y_{k}|/2
=
\max_{1\leq i\leq M}Y_{i}/2
-
\min_{1\leq i\leq M}Y_{i}/2.
\end{equation*}

The solution $\theta$ is given by the condition
\begin{equation*}
-
\mu
+
\max_{1\leq i\leq M}
Y_{i}
\leq
\theta
\leq
\mu
+
\min_{1\leq i\leq M}
Y_{i},
\end{equation*}
which, after substitution of the above expression for $\mu$, leads to the unique result
\begin{equation*}
\theta
=
\max_{1\leq i\leq M}Y_{i}/2
+
\min_{1\leq i\leq M}Y_{i}/2.
\end{equation*}

\subsection{Two-Parameter Linear Regression Problem}

We now turn to two-parameter problems, which can be solved by twofold application of lemma~\ref{L-mintheta1thetaN}. To avoid cumbersome calculations, we concentrate on a special case in which, given variables $X_{i},Y_{i}\in\mathbb{R}$ for all $i=1,\ldots,M$, our aim is to find the parameters $\theta_{1},\theta_{2}\in\mathbb{R}$ to achieve
\begin{equation}
\begin{aligned}
&&
\min_{\theta_{1},\theta_{2}}
&&&
\max_{1\leq i\leq M}|\theta_{1}+X_{i}\theta_{2}-Y_{i}|.
\end{aligned}
\label{P-minmaxtheta1Xi2theta2Yi}
\end{equation}

\begin{proposition}
The minimum error in problem \eqref{P-minmaxtheta1Xi2theta2Yi} is equal to
\begin{equation}
\mu
=
\max_{\substack{1\leq i<k\leq M\\|X_{i}|+|X_{k}|\ne0}}
\max_{\substack{1\leq p<r\leq M\\X_{r}\ne X_{p}}}
\frac{|(Y_{i}X_{k}-Y_{k}X_{i})(X_{r}-X_{p})-(Y_{p}X_{r}-Y_{r}X_{p})(X_{k}-X_{i})|}
{(|X_{i}|+|X_{k}|)|X_{r}-X_{p}|+(|X_{p}|+|X_{r}|)|X_{k}-X_{i}|},
\label{E-mu}
\end{equation}
and all solutions of the problem are given by the conditions 
\begin{multline}
\max_{\substack{1\leq i<k\leq M\\X_{k}\ne X_{i}}}
\left(
-
\frac{|X_{i}|+|X_{k}|}{|X_{k}-X_{i}|}
\mu
+
\frac{Y_{i}X_{k}-Y_{k}X_{i}}{X_{k}-X_{i}}
\right)
\leq
\theta_{1}
\\\leq
\min_{\substack{1\leq i<k\leq M\\X_{k}\ne X_{i}}}
\left(
\frac{|X_{i}|+|X_{k}|}{|X_{k}-X_{i}|}
\mu
+
\frac{Y_{i}X_{k}-Y_{k}X_{i}}{X_{k}-X_{i}}
\right),
\label{I-theta1}
\end{multline}

\begin{equation}
\max_{\substack{1\leq i\leq M\\X_{i}\ne0}}
\left(
-
\frac{\mu}{|X_{i}|}
-
\frac{\theta_{1}}{X_{i}}+\frac{Y_{i}}{X_{i}}
\right)
\leq
\theta_{2}
\leq
\min_{\substack{1\leq i\leq M\\X_{i}\ne0}}
\left(
\frac{\mu}{|X_{i}|}
-
\frac{\theta_{1}}{X_{i}}+\frac{Y_{i}}{X_{i}}
\right).
\label{I-theta2}
\end{equation}
\end{proposition}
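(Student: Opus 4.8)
The plan is to apply Lemma~\ref{L-mintheta1thetaN} twice, observing that problem \eqref{P-minmaxtheta1Xi2theta2Yi} is the special case of \eqref{P-mintheta1thetaN} with $N=2$, $X_{i1}=1$ and $X_{i2}=X_{i}$ for all $i$. First I would invoke the lemma with $N=2$. Substituting $X_{i2}=X_{i}$ and $\theta_{2}=\theta_{N}$ into the box constraint \eqref{I-thetaN}, and noting that the only surviving term of the inner sum is $\theta_{1}/X_{i}$, that constraint turns directly into \eqref{I-theta2} with $\mu$ the minimum of the reduced problem. The reduced problem \eqref{P-mintheta1thetaN1} becomes a one-parameter problem of the shape \eqref{P-minmaxXitheta1Yi}: using $X_{i1}X_{k}-X_{k1}X_{i}=X_{k}-X_{i}$, the role of $X_{i}$ is played by $a_{ik}=(X_{k}-X_{i})/(|X_{i}|+|X_{k}|)$ and the role of $Y_{i}$ by $b_{ik}=(Y_{i}X_{k}-Y_{k}X_{i})/(|X_{i}|+|X_{k}|)$, indexed by pairs $1\leq i<k\leq M$ with $|X_{i}|+|X_{k}|\ne0$.

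Second, I would apply Proposition~\ref{O-minmaxXitheta1Yi} (the lemma with $N=1$) to this reduced one-parameter problem. Its minimum error is the maximum of $|b_{ik}a_{pr}-b_{pr}a_{ik}|/(|a_{ik}|+|a_{pr}|)$ over admissible pairs; substituting the formulas for $a$ and $b$ and cancelling the common factor $(|X_{i}|+|X_{k}|)(|X_{p}|+|X_{r}|)$ from numerator and denominator turns this quotient into the one displayed in \eqref{E-mu}. Likewise, the box constraint for $\theta_{1}$ provided by the proposition, after the same substitutions and the simplifications $b_{ik}/a_{ik}=(Y_{i}X_{k}-Y_{k}X_{i})/(X_{k}-X_{i})$ and $1/|a_{ik}|=(|X_{i}|+|X_{k}|)/|X_{k}-X_{i}|$ (both legitimate exactly when $X_{k}\ne X_{i}$, which is the same as $a_{ik}\ne0$), becomes \eqref{I-theta1}. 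Chaining the two equivalences furnished by Lemma~\ref{L-mintheta1thetaN} then gives that $(\theta_{1},\theta_{2})$ solves \eqref{P-minmaxtheta1Xi2theta2Yi} if and only if $\mu$, $\theta_{1}$, $\theta_{2}$ satisfy \eqref{E-mu}, \eqref{I-theta1}, \eqref{I-theta2}.

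The routine part is the simplification of these nested fractions; the step that needs care is the bookkeeping of the index sets in \eqref{E-mu}. Proposition~\ref{O-minmaxXitheta1Yi} first yields a maximum over all $1\leq i,k\leq M$ and $1\leq p,r\leq M$ with $|a_{ik}|+|a_{pr}|\ne0$, and I would trim this as follows. The quantity under the maximum is the absolute value of an expression that is antisymmetric under the swap $i\leftrightarrow k$ and under the swap $p\leftrightarrow r$, hence invariant under either, and it vanishes when $i=k$ or $p=r$; this allows the first pair to be restricted to $1\leq i<k\leq M$ with $|X_{i}|+|X_{k}|\ne0$. Moreover the expression merely changes sign when the whole pair $(i,k)$ is interchanged with $(p,r)$, so its absolute value is symmetric in the two pairs; since $|a_{ik}|+|a_{pr}|\ne0$ forces at least one pair to have distinct $X$-values (because $a_{ik}=0$ precisely when $X_{i}=X_{k}$), I may take the second pair to be that non-degenerate one, impose $X_{r}\ne X_{p}$, and reduce it to $1\leq p<r\leq M$. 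I would finally remark that \eqref{E-mu} tacitly presumes that the $X_{i}$ are not all equal; otherwise the problem collapses to one effective parameter and the outer maximum ranges over the empty set, so this is the natural non-degeneracy hypothesis for a genuine two-parameter regression.
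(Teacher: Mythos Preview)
Your proof is correct and follows essentially the same approach as the paper's own: a twofold application of Lemma~\ref{L-mintheta1thetaN} (with $N=2$ and then $N=1$), the first pass producing \eqref{I-theta2} and the reduced one-parameter problem, the second pass producing \eqref{I-theta1} and \eqref{E-mu}. Your presentation is slightly more streamlined in that you invoke Proposition~\ref{O-minmaxXitheta1Yi} directly rather than reproducing its derivation, and you reach the asymmetric index conditions in \eqref{E-mu} via the $(i,k)\leftrightarrow(p,r)$ symmetry rather than by separately tracking the $X_{k}=X_{i}$ case as the paper does; both routes are equivalent and your handling of the index bookkeeping, including the non-degeneracy remark, is sound.
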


\begin{proof}
We apply lemma~\ref{L-mintheta1thetaN} with $N=2$, where we take $X_{i1}=1$ and $X_{i2}=X_{i}$ for all $i=1,\ldots,M$. As a result, problem \eqref{P-minmaxtheta1Xi2theta2Yi} reduces to the one-parameter problem
\begin{equation*}
\begin{aligned}
&&
\min_{\theta_{1}}
&&&
\max_{\substack{1\leq i<k\leq M\\|X_{i}|+|X_{k}|\ne0}}
\left|
\frac{X_{k}-X_{i}}{|X_{i}|+|X_{k}|}\theta_{1}
-
\frac{Y_{i}X_{k}-Y_{k}X_{i}}{|X_{i}|+|X_{k}|}
\right|
\end{aligned}
\end{equation*}
and box constraint for $\theta_{2}$ in the form of the double inequality at \eqref{I-theta2}, where $\mu$ is the minimum in the one-parameter problem.

We note that the objective function in the problem does not change if we replace the condition $1\leq i<k\leq M$ for indices over which the maximum is taken, by the extended condition $1\leq i,k\leq M$.

In a similar way as in lemma~\ref{L-mintheta1thetaN}, we first represent the one-parameter problem under consideration~as
\begin{equation*}
\begin{aligned}
&&
\min_{\theta_{1}}
&&&
\lambda,
\\
&&
\text{s.t.}
&&&
\max_{\substack{1\leq i,k\leq M\\|X_{i}|+|X_{k}|\ne0}}
\left|
\frac{X_{k}-X_{i}}{|X_{i}|+|X_{k}|}\theta_{1}
-
\frac{Y_{i}X_{k}-Y_{k}X_{i}}{|X_{i}|+|X_{k}|}
\right|
\leq
\lambda.
\end{aligned}
\end{equation*}

The inequality constraint in this problem is equivalent to the system of inequalities
\begin{align*}
\lambda
&\geq
-
\frac{X_{k}-X_{i}}{|X_{i}|+|X_{k}|}\theta_{1}
+
\frac{Y_{i}X_{k}-Y_{k}X_{i}}{|X_{i}|+|X_{k}|},
\\
\lambda
&\geq
\mathbin{\phantom{-}}
\frac{X_{k}-X_{i}}{|X_{i}|+|X_{k}|}\theta_{1}
-
\frac{Y_{i}X_{k}-Y_{k}X_{i}}{|X_{i}|+|X_{k}|},
\qquad
|X_{i}|+|X_{k}|\ne0;
\qquad
1\leq i,k\leq M.
\end{align*}

After solving the inequalities for $\theta_{1}$, we rewrite the system as
\begin{align*}
-\theta_{1}
&\geq
-
\frac{|X_{i}|+|X_{k}|}{|X_{k}-X_{i}|}
\lambda
-
\frac{Y_{i}X_{k}-Y_{k}X_{i}}{X_{k}-X_{i}},
\\
\theta_{1}
&\geq
-
\frac{|X_{i}|+|X_{k}|}{|X_{k}-X_{i}|}
\lambda
+
\frac{Y_{i}X_{k}-Y_{k}X_{i}}{X_{i1}X_{k}-X_{k1}X_{i}},
\qquad
X_{k}\ne X_{i};
\\
\lambda
&\geq
\frac{|Y_{i}X_{k}-Y_{k}X_{i}|}{|X_{i}|+|X_{k}|},
\qquad
X_{k}=X_{i};
\qquad
|X_{i}|+|X_{k}|\ne0;
\qquad
1\leq i,k\leq M.
\end{align*}

By combining these inequalities, we obtain
\begin{equation}
\begin{aligned}
-\theta_{1}
&\geq
\max_{\substack{1\leq i,k\leq M\\|X_{i}|+|X_{k}|\ne0\\X_{k}\ne X_{i}}}
\left(
-
\frac{|X_{i}|+|X_{k}|}{|X_{k}-X_{i}|}
\lambda
-
\frac{Y_{i}X_{k}-Y_{k}X_{i}}{X_{k}-X_{i}}
\right),
\\
\theta_{1}
&\geq
\max_{\substack{1\leq i,k\leq M\\|X_{i}|+|X_{k}|\ne0\\X_{k}\ne X_{i}}}
\left(
-
\frac{|X_{i}|+|X_{k}|}{|X_{k}-X_{i}|}
\lambda
+
\frac{Y_{i}X_{k}-Y_{k}X_{i}}{X_{k}-X_{i}}
\right),
\\
\lambda
&\geq
\max_{\substack{1\leq i,k\leq M\\|X_{i}|+|X_{k}|\ne0\\X_{k}=X_{i}}}
\frac{|Y_{i}X_{k}-Y_{k}X_{i}|}{|X_{i}|+|X_{k}|}
=
\max_{\substack{1\leq i,k\leq M\\|X_{i}|\ne0}}
\frac{|Y_{i}-Y_{k}|}{2}.
\end{aligned}
\label{E-theta1-lambda}
\end{equation}

The first two inequalities yield the double inequality
\begin{multline*}
\max_{\substack{1\leq i,k\leq M\\|X_{i}|+|X_{k}|\ne0\\X_{k}\ne X_{i}}}
\left(
-
\frac{|X_{i}|+|X_{k}|}{|X_{k}-X_{i}|}
\lambda
+
\frac{Y_{i}X_{k}-Y_{k}X_{i}}{X_{k}-X_{i}}
\right)
\leq
\theta_{1}
\\\leq
\min_{\substack{1\leq i,k\leq M\\|X_{i}|+|X_{k}|\ne0\\X_{k}\ne X_{i}}}
\left(
\frac{|X_{i}|+|X_{k}|}{|X_{k}-X_{i}|}
\lambda
+
\frac{Y_{i}X_{k}-Y_{k}X_{i}}{X_{k}-X_{i}}
\right).
\end{multline*}

Since, under the condition $X_{k}\ne X_{i}$, the condition $|X_{i}|+|X_{k}|\ne0$ holds as well, we exclude the latter one. Observing that the terms under the $\max$ and $\min$ operators are invariant under permutation of $i$ and $k$, we adjust the condition on indices to write the box constraint for $\theta_{1}$ as \eqref{I-theta1}.

The feasibility condition for the box constraint to be valid for $\theta_{1}$ takes the form of the inequality
\begin{multline*}
\max_{\substack{1\leq i,k\leq M\\X_{k}\ne X_{i}}}
\max_{\substack{1\leq p,r\leq M\\X_{r}\ne X_{p}}}
\left(
-
\left(
\frac{|X_{i}|+|X_{k}|}{|X_{k}-X_{i}|}
+
\frac{|X_{p}|+|X_{r}|}{|X_{r}-X_{p}|}
\right)
\lambda
\right.
\\+
\left.
\frac{Y_{i}X_{k}-Y_{k}X_{i}}{X_{k}-X_{i}}
-
\frac{Y_{p}X_{r}-Y_{r}X_{p}}{X_{r}-X_{p}}
\right)
\leq0.
\end{multline*}

As before, we represent this inequality as the system of inequalities for each $i,k$ and $p,r$, and then solve these inequalities for $\lambda$. After combining the solutions back into one inequality and adding the last inequality at \eqref{E-theta1-lambda}, we obtain
\begin{equation*}
\lambda
\geq
\max_{\substack{1\leq i,k\leq M\\|X_{i}|+|X_{k}|\ne0}}
\max_{\substack{1\leq p,r\leq M\\X_{r}\ne X_{p}}}
\frac{|(Y_{i}X_{k}-Y_{k}X_{i})(X_{r}-X_{p})-(Y_{p}X_{r}-Y_{r}X_{p})(X_{k}-X_{i})|}
{(|X_{i}|+|X_{k}|)|X_{r}-X_{p}|+(|X_{p}|+|X_{r}|)|X_{k}-X_{i}|},
\end{equation*}
where the expression on the right-hand side determines the minimum $\mu$.

Finally, we note that the fractional term under maximization is invariant with respect to interchanging the indices $i$ and $k$, as well as $p$ and $r$. Therefore, we can replace the conditions $1\leq i,k\leq M$ and $1\leq p,r\leq M$ by the conditions $1\leq i<k\leq M$ and $1\leq p<r\leq M$, which yields the representation for $\mu$ in the form of \eqref{E-mu}.
\end{proof}

\section{General Solution Procedure}
\label{S-GSP}

We now use lemma~\ref{L-mintheta1thetaN} to derive a complete solution of problem \eqref{P-mintheta1thetaN} by performing a series of reduction steps, each eliminating an unknown parameter in the problem and determining a box constraint for this parameter. We observe that the elimination of a parameter from the objective function as described in lemma~\ref{L-mintheta1thetaN} leaves the general form of the problem unchanged. Therefore, we can repeat the elimination over and over again until the function has no more parameters, and thus becomes a constant that shows the minimum of the objective function in the initial problem.

At the same time, together with the elimination of a parameter from the objective function, lemma~\ref{L-mintheta1thetaN} offers a box constraint for this parameter, represented in terms of those parameters which are retained in the function. We see that the last constraint does not depend on any other parameters, and thus is directly given by a double inequality with constant values on both sides. As a result, we can take the box constraints in the order from the last constraint to the first, which yields a system of double inequalities that completely determines the solution set of the problem.  

We are in a position to describe the solution procedure formally in more detail. The procedure follows a direct solution method that examines the unknown parameters in reversal order, starting from the parameter $\theta_{N}$ and going backward to the parameter $\theta_{1}$. Let $n$ be the number of parameters in the objective function in the current step of the procedure.

Initially, we take $n=N$ and set $M_{n}=M$. For all $i=1,\ldots,M_{n}$ and $j=1,\ldots,n$, we define
\begin{equation*}
X_{ij}^{n}
=
X_{ij},
\qquad
Y_{i}^{n}
=
Y_{i}.
\end{equation*} 

We also introduce the matrix-vector notation
\begin{equation*}
\bm{X}_{n}
=
\left(
\begin{array}{ccc}
X_{11}^{n} & \dots & X_{1n}^{n}
\\
\vdots & \ddots & \vdots
\\
X_{M_{n},1}^{n} & \dots & X_{M_{n},n}^{n}
\end{array}
\right),
\qquad
\bm{Y}_{n}
=
\left(
\begin{array}{c}
Y_{1}^{n}
\\
\vdots
\\
Y_{M_{n}}^{n}
\end{array}
\right),
\qquad
\bm{\theta}_{n}
=
\left(
\begin{array}{c}
\theta_{1}
\\
\vdots
\\
\theta_{n}
\end{array}
\right).
\end{equation*}

For each $n=N,N-1,\ldots,1$, the procedure produces a two-fold outcome: the reduction of the current problem by eliminating an unknown parameter, and the derivation of a box constraint for the eliminated parameter.

\subsection{Elimination of Parameters}

Assuming that the norm sign in what follows stands for the Chebyshev norm, we start with eliminating the parameter $\theta_{n}$ from the problem
\begin{equation*}
\begin{aligned}
&&
\min_{\theta_{1},\ldots,\theta_{n}}
&&
\max_{1\leq i\leq M_{n}}
\left|
\sum_{j=1}^{n}
X_{ij}^{n}\theta_{j}-Y_{i}^{n}
\right|
\;
=
\;
\min_{\bm{\theta}_{n}}
&&
\|\bm{X}_{n}\bm{\theta}_{n}-\bm{Y}_{n}\|.
\end{aligned}
\end{equation*}

It follows from lemma~\ref{L-mintheta1thetaN} that, as a result of this elimination, the problem reduces, if $n>1$, to the~problem
\begin{equation*}
\begin{aligned}
&&
\min_{\theta_{1},\ldots,\theta_{n-1}}
&&
\max_{1\leq i\leq M_{n-1}}
\left|
\sum_{j=1}^{n-1}
X_{ij}^{n-1}\theta_{j}-Y_{i}^{n-1}
\right|
\;
=
\;
\min_{\bm{\theta}_{n-1}}
&&
\|\bm{X}_{n-1}\bm{\theta}_{n-1}-\bm{Y}_{n-1}\|,
\end{aligned}
\end{equation*}
or degenerates, if $n=1$, to the constant
\begin{equation*}
\max_{1\leq i\leq M_{0}}
\left|
Y_{i}^{0}
\right|
\;
=
\;
\|\bm{Y}_{0}\|.
\end{equation*}

We now exploit the representation of problem \eqref{P-mintheta1thetaN1} to establish formulas of recalculating the objective function when changing to the reduced problem.

First, we consider the condition for indices in \eqref{P-mintheta1thetaN1}, which takes the form $1\leq i<k\leq M_{n}$. We assume that the pairs of indices $(i,k)$ defined by the condition are listed in the order of the sequence
\begin{equation*}
(1,2),\ldots,(1,M_{n}),(2,3),\ldots,(2,M_{n}),\ldots,(M_{n}-1,M_{n}).
\end{equation*}

It is not difficult to verify by direct substitution that each fixed pair $(i,k)$  in this sequence has the number (index) calculated as
\begin{equation*}
M_{n}(i-1)-i(i-1)/2+k-1.
\end{equation*}

Furthermore, we use \eqref{P-mintheta1thetaN1} to define, for all $i$ and $k$ such that $1\leq i<k\leq M_{n}$ and for all $j=1,\ldots,n-1$, the recurrent formulas
\begin{equation}
\begin{aligned}
X_{M_{n}(i-1)-i(i-1)/2+k-1,j}^{n-1}
&=
\begin{cases}
\frac{X_{ij}^{n}X_{kn}^{n}-X_{kj}^{n}X_{in}^{n}}{|X_{in}^{n}|+|X_{kn}^{n}|},
&
\text{if $|X_{in}^{n}|+|X_{kn}^{n}|\ne0$};
\\
0
&
\text{otherwise};
\end{cases}
\\
Y_{M_{n}(i-1)-i(i-1)/2+k-1}^{n-1}
&=
\begin{cases}
\frac{Y_{i}^{n}X_{kn}^{n}-Y_{k}^{n}X_{in}^{n}}{|X_{in}^{n}|+|X_{kn}^{n}|},
&
\text{if $|X_{in}^{n}|+|X_{kn}^{n}|\ne0$};
\\
0
&
\text{otherwise}.
\end{cases}
\end{aligned}
\label{E-Xn-Yn}
\end{equation}

Note that if $|X_{in}^{n}|+|X_{kn}^{n}|=0$, then the above formulas produce a row of zeros that corresponds to a zero term, which does not contribute to the objective function. We assume that all such zero rows are removed, and the rest of the rows are renumbered (reindexed) to preserve continual enumeration. 

We denote the number of nonzero rows by $M_{n-1}$ and observe that
\begin{equation*}
M_{n-1}
\leq
M_{n}(M_{n}-1)/2.
\end{equation*}

Finally, we take the numbers $X_{ij}^{n-1}$ and $Y_{i}^{n-1}$ with $i=1,\ldots,M_{n-1}$ and $j=1,\ldots,n-1$ to form the matrix and vector
\begin{equation*}
\bm{X}_{n-1}
=
(X_{ij}^{n-1}),
\qquad
\bm{Y}_{n-1}
=
(Y_{i}^{n-1}),
\end{equation*}
which completely determine the objective function in the reduced problem. Specifically, the reduced problem for $n=1$ degenerates into the constant
\begin{equation}
\mu
=
\|\bm{Y}_{0}\|
\label{E-muY0},
\end{equation}
representing the minimum of the objective function in the initial problem.

\subsection{Derivation of Box Constraints}

We take the double inequality at \eqref{I-thetaN} and adjust it to write the box constraint for the parameter $\theta_{n}$ in the form
\begin{multline*}
\max_{\substack{1\leq i\leq M_{n}\\X_{in}^{n}\ne0}}
\left(
-
\frac{\mu}{|X_{in}^{n}|}
-
\sum_{j=1}^{n-1}
\frac{X_{ij}^{n}}{X_{in}^{n}}\theta_{j}
+
\frac{Y_{i}^{n}}{X_{in}^{n}}
\right)
\leq
\theta_{n}
\\\leq
\min_{\substack{1\leq i\leq M_{n}\\X_{in}^{n}\ne0}}
\left(
\frac{\mu}{|X_{in}^{n}|}
-
\sum_{j=1}^{n-1}
\frac{X_{ij}^{n}}{X_{in}^{n}}\theta_{j}
+
\frac{Y_{i}^{n}}{X_{in}^{n}}
\right),
\end{multline*}
where $\mu$ denotes the minimum value of the objective function in problem \eqref{P-mintheta1thetaN}.

To represent this inequality constraint in vector form, we introduce the following notation. First, for all $i=1,\ldots,M_{n}$ such that $X_{in}^{n}\ne0$ and all $j=1,\ldots,n-1$, we define
\begin{equation}
T_{ij}^{n}
=
\frac{X_{ij}^{n}}{X_{in}^{n}},
\qquad
L_{i}^{n}
=
\frac{Y_{i}^{n}}{X_{in}^{n}}
-
\frac{\mu}{|X_{in}^{n}|},
\qquad
U_{i}^{n}
=
\frac{Y_{i}^{n}}{X_{in}^{n}}
+
\frac{\mu}{|X_{in}^{n}|}.
\label{E-Tijn-Lin-Uin}
\end{equation}

We note that all indices $i$ with $X_{in}^{n}=0$ are not taken into account when calculating the maximum and minimum in the double inequality, and hence are excluded from the index set $1,\ldots,M_{n}$. Assuming that the rest of indices are renumbered to preserve continual enumeration, we introduce the matrix and column vectors
\begin{equation*}
\bm{T}_{n}
=
(T_{ij}^{n}),
\qquad
\bm{L}_{n}
=
(L_{i}^{n}),
\qquad
\bm{U}_{n}
=
(U_{i}^{n}).
\end{equation*}

With this matrix-vector notation, we write the box constraint, if $n>1$, as the double inequality
\begin{equation}
\max(\bm{L}_{n}-\bm{T}_{n}\bm{\theta}_{n-1})
\leq
\theta_{n}
\leq
\min(\bm{U}_{n}-\bm{T}_{n}\bm{\theta}_{n-1}),
\label{I-thetan_vector}
\end{equation}
and, if $n=1$, as the inequality
\begin{equation}
\max(\bm{L}_{1})
\leq
\theta_{1}
\leq
\min(\bm{U}_{1}),
\label{I-theta1_vector}
\end{equation}
where $\max$ and $\min$ symbols are thought of as operators that calculate the maximum and minimum over all elements of corresponding vectors.

\subsection{Solution Algorithm}

We summarize the above consideration in the form of a computational algorithm to solve problem \eqref{P-mintheta1thetaN} in a finite number of steps. The algorithm includes two sequential phases: backward elimination and forward determination (substitution) of the unknown parameters.

The backward elimination starts with $n=N$ by setting $M_{n}=M$ and
\begin{equation*}
\bm{X}_{n}
=
\bm{X},
\qquad
\bm{Y}_{n}
=
\bm{Y}.
\end{equation*} 

Furthermore, for each $n=N,\ldots,1$, the matrix $\bm{X}_{n}$ and vector $\bm{Y}_{n}$ are used as described above to obtain the values of $\bm{X}_{n-1}$ and $\bm{Y}_{n-1}$ if $n>1$ or the value of $\bm{Y}_{0}$ if $n=1$. As supplementary results, the matrices $\bm{T}_{n}$ are also evaluated from the matrices $\bm{X}_{n}$.

The backward elimination completes at $n=1$ by calculating the minimum value of the objective function, given by $\mu=\|\bm{Y}_{0}\|$.

The forward determination first uses the obtained minimum $\mu$, matrix $\bm{X}_{1}$ and vector $\bm{Y}_{1}$ to calculate the vectors $\bm{L}_{1}$ and $\bm{U}_{1}$ and then evaluate the box constraint for the unknown $\theta_{1}$ in the form of double inequality at \eqref{I-theta1_vector}. Then, for each $n=2,\ldots,N$, the vectors $\bm{L}_{n}$ and $\bm{U}_{n}$ are calculated from $\mu$, $\bm{X}_{n}$ and $\bm{Y}_{n}$ to represent the box constraints for $\theta_{n}$ as in \eqref{I-thetan_vector}.

Note that the bounds in the box constraint for the parameter $\theta_{1}$ are explicitly defined by constants, whereas the bounds for each parameter $\theta_{n}$ with $n>1$ are defined as linear functions of the previous parameters $\theta_{1},\ldots,\theta_{n-1}$. Therefore, we can first fix a value to satisfy the box constraint for $\theta_{1}$ and then substitute this value into the box constraint for $\theta_{2}$ to obtain explicit bounds given by constants. By repeating such calculations to fix a value for a parameter with explicit bounds and to evaluate bounds for the next parameter, we can successively determine a solution of the problem.

\subsection{Computational Complexity}

The most computationally intensive and memory demanding component of the algorithm, which determines the overall rate of computational complexity and memory requirement, is the calculation of entries in the matrices $\bm{X}_{n}$ and vectors $\bm{Y}_{n}$ for all $n=N-1,N-2,\ldots,1$, and vector $\bm{Y}_{0}$ by using \eqref{E-Xn-Yn}. Though calculating one entry involves a few simple operations, the number of all entries grows very fast as $M$ and $N$ increase.

To derive a rough estimate for the number of entries in all matrices, we first evaluate the number of rows in each matrix. Assuming that the matrix $\bm{X}_{N}=\bm{X}$ has $M$ rows, we see that the number of rows in $\bm{X}_{N-1}$ is bounded from above by
\begin{equation*}
M_{N-1}
\leq
M_{N}(M_{N}-1)/2
=
M(M-1)/2
<
M^{2}/2
=
2(M/2)^{2}.
\end{equation*}

Recursive application of this estimate yields an upper bound for the number of rows in the matrices $X_{N-l}$ for each $l=1,\ldots,N-1$ in the form 
\begin{equation*}
M_{N-l}
\leq
M_{N-l+1}(M_{N-l+1}-1)/2
<
M_{N-l+1}^{2}/2
<
2(M/2)^{{2}^{l}}.
\end{equation*}

At the last step with $l=N$, we calculate the vector $\bm{Y}_{0}$, in which the number of entries is no more~than
\begin{equation*}
M_{0}
\leq
M_{1}(M_{1}-1)/2
<
M_{1}^{2}/2
<
2(M/2)^{{2}^{N}}.
\end{equation*}

Since we have $n+1$ columns in the matrix $\bm{X}_{n}$ together with the vector $\bm{Y}_{n}$, the overall number of the entries in all steps can be estimated as
\begin{equation*}
\sum_{l=1}^{N}
(N-l+1)M_{N-l}
<
\sum_{l=1}^{N}
2(N-l+1)(M/2)^{{2}^{l}}.
\end{equation*}

We denote the upper bound on the right-hand side by $C(N,M)$ and observe that this bound increases polynomially with respect to $M$, and double exponentially with $N$. For problems with few parameters, the value of $C(N,M)$ seems to be rather acceptable. Specifically, in the three-parameter case with $N=3$ and $M=10,20,50$, we have $C(3,10)=783,900$, $C(3,20)=200,040,600$, and $C(3,50)=305,177,347,500$. A further increase of the number of parameters $N$ results in a rapid rise in the value of $C(N,M)$, as the following examples show: $C(4,10)=305,177,347,700$, $C(5,10)=610,353,911,500$.

Note that the actual number of entries to calculate is fewer than that given by the bound $C(N,M)$. As it follows from \eqref{E-Xn-Yn}, this number can be further reduced if the matrices $\bm{X}_{n}$, or at least the initial matrix $\bm{X}_{N}=\bm{X}$, have many zero entries (sparse matrices). It is clear that, if a matrix has zero entries in a column other than the last one, the columns (and related parameters) can be renumbered to put the column with zero entries on the last place where zero entries can reduce computations. As an example of the case with good chances of having sparse matrices $\bm{X}_{n}$, one can consider problems where the initial matrix $\bm{X}$ has entries that take values from the set $\{-1,0,1\}$.

Finally, we observe that the calculations by formula \eqref{E-Xn-Yn} can be performed for different entries quite independently, which offers strong potential for parallel implementation of the procedure on parallel and vector computers to provide more computational and memory resources, and hence to extend applicability to problems of higher dimensions.

\section{Software Implementation and Numerical Examples}
\label{S-SINE}

We conclude with comments on software implementation of the solution procedure and illustrative numerical examples of low dimensions that demonstrate the computational technique involved in the solution. For the sake of illustration, we concentrate on application to problems with exact input data given by integer (rational) numbers to find explicit rational solutions.

To obtain exact solutions, the procedure has been coded for serial symbolic computations in the MATLAB (Release R2020b) environment as a collection of functions that calculate all intermediate matrices and vectors, as well as provide the overall functionality of the algorithm. The numerical experiments were conducted on a custom computer with a 4-core 8-thread Intel Xeon E3-1231 v3 CPU at 3.40GHz and 32GB of DDR3 RAM, running Windows~10 Enterprise 64-bit OS.

\begin{example}
Let us take $N=3$ and $M=4$ and consider the approximation (regression) problem with
\begin{equation*}
\bm{X}
=
\left(
\begin{array}{rrr}
 3 & -1 & 2
\\
-1 & -2 & 2
\\
-2 & 3 & -1
\\
 0 & 2 & -1
\end{array}
\right),
\qquad
\bm{Y}
=
\left(
\begin{array}{r}
 2
\\
 1
\\
-1
\\
 0
\end{array}
\right).
\end{equation*}

We start with the backward elimination phase by setting $\bm{X}_{3}=\bm{X}$ and $\bm{Y}_{3}=\bm{Y}$. We use \eqref{E-Xn-Yn} to calculate the entries in
\begin{equation*}
\bm{X}_{2}
=
\left(
\begin{array}{rr}
2 & 1/2
\\
1/3 & -5/3
\\
-1 & -1
\\
5/3 & -4/3
\\
1/3 & -2/3
\\
1 & -1/2\\
\end{array}
\right),
\qquad
\bm{Y}_{2}
=
\left(
\begin{array}{r}
1/2
\\
0
\\
-2/3
\\
1/3
\\
-1/3
\\
1/2
\end{array}
\right),
\end{equation*}
and then the entries in
\begin{equation*}
\bm{X}_{1}
=
\left(
\begin{array}{r}
-21/13
\\
-1
\\		
-21/11
\\
-9/7
\\	
-3/2
\\	
-3/4
\\	
7/9
\\	
1/7
\\	
9/13
\\	
9/7
\\	
3/5
\\	
1
\\		
-1/3
\\	
3/11
\\	
3/7
\end{array}
\right),
\qquad
\bm{Y}_{1}
=
\left(
\begin{array}{r}
-5/13
\\
-1/9
\\
-5/11
\\
-1/7
\\
-1/2
\\
-5/12
\\
5/27
\\
-5/21
\\
5/13
\\
11/21
\\
1/15
\\
5/9
\\	
-1/3
\\
3/11
\\
3/7
\end{array}
\right).
\end{equation*}

Next, we obtain the vector $\bm{Y}_{0}$, which appears to have $105$ entries and thus is not shown here to save space. Evaluating the maximum entry of $\bm{Y}_{0}$ as the minimum of the objective function according to \eqref{E-muY0} yields
\begin{equation*}
\mu
=
2/7.
\end{equation*}

In parallel with evaluating the entries of $\bm{X}_{n}$ and $\bm{Y}_{n}$, we apply \eqref{E-Tijn-Lin-Uin} to find the entries in
\begin{equation*}
\bm{T}_{3}
=
\left(
\begin{array}{rc}
-3/2 & 1/2
\\
1/2 & 1
\\
-2 & 3
\\
0 & 2
\end{array}
\right),
\qquad
\bm{T}_{2}
=
\left(
\begin{array}{r}
-4
\\
1/5
\\
-1
\\
5/4
\\
1/2
\\
2
\end{array}
\right).
\end{equation*}

Substitution of the obtained minimum $\mu=2/7$ yields
\begin{gather*}
\bm{L}_{3}
=
\left(
\begin{array}{r}
6/7
\\
5/14
\\
5/7
\\
-2/7
\end{array}
\right),
\qquad
\bm{U}_{3}
=
\left(
\begin{array}{c}
8/7
\\
9/14
\\
9/7
\\
2/7
\end{array}
\right),
\\
\bm{L}_{2}
=
\left(
\begin{array}{r}
3/7
\\
-6/35
\\
8/21
\\
-13/28
\\
1/14
\\
-11/7
\end{array}
\right),
\qquad
\bm{U}_{2}
=
\left(
\begin{array}{r}
11/7
\\
6/35
\\
20/21
\\
-1/28
\\
13/14
\\
-3/7
\end{array}
\right).
\end{gather*}

Finally, we calculate the vectors
\begin{equation*}
\bm{L}_{1}
=
\left(
\begin{array}{r}
3/49
\\
-11/63
\\
13/147
\\
-1/9
\\
1/7
\\
11/63
\\
-19/147
\\
-11/3
\\
1/7
\\
5/27
\\
-23/63
\\
17/63
\\
1/7
\\
-1/21
\\
1/3
\end{array}
\right),
\qquad
\bm{U}_{1}
=
\left(
\begin{array}{c}
61/147
\\
25/63
\\
19/49
\\
1/3
\\
11/21
\\
59/63
\\
89/147
\\
1/3
\\
61/63
\\
17/27
\\
37/63
\\
53/63
\\
13/7
\\
43/21
\\
5/3
\end{array}
\right).
\end{equation*}

The forward determination (substitution) phase of the procedure involves application of \eqref{I-theta1_vector} and then \eqref{I-thetan_vector} to obtain the unique solution
\begin{equation*}
\theta_{1}
=
1/3,
\qquad
\theta_{2}
=
5/21,
\qquad
\theta_{3}
=
16/21.
\end{equation*}

The actual number of entries in the matrices $\bm{X}_{n}$ and vectors $\bm{Y}_{n}$ to calculate in the problem is $153$, which is fewer than the upper bound given by $C(3,4)=600$. The computer time to obtain the exact solution by symbolic computation averages $2.3$~sec.
\end{example}

\begin{example}
Consider the problem with $N=3$ and $M=10$, where the input data are given by
\begin{equation*}
\bm{X}
=
\left(
\begin{array}{rrr}
3 & -1 & 2
\\
-1 & -2 & 2
\\
-2 & 3 & -1
\\
0 & 2 & -1
\\
1 & 2 & -1
\\
3 & 1 & 0
\\
1 & 1 & -1
\\
-1 & -1 & 2
\\
0 & 3 & 1
\\
2 & 1 & 0
\end{array}
\right),
\qquad
\bm{Y}
=
\left(
\begin{array}{r}
2
\\
1
\\
-1
\\
0
\\
1
\\
-1
\\
2
\\
0
\\
1
\\
3
\end{array}
\right).
\end{equation*}

The application of the procedure yields the unique solution
\begin{equation*}
\theta_{1}
=
1/9,
\qquad
\theta_{2}
=
13/18,
\qquad
\theta_{3}
=
8/9.
\end{equation*}

The matrices $\bm{X}_{n}$ and vectors $\bm{Y}_{n}$ involved in calculations have $444,280$ entries (while the corresponding upper bound is $C(3,10)=783,900$). The symbolic computations take about $52$~min of computer time.
\end{example}

\section{Conclusions}
\label{S-C}

A direct computational technique has been proposed to solve discrete linear Chebyshev approximation problems, which find wide application in various areas, including the least maximum absolute deviation regression in statistics. First, we have shown that the problem under consideration can be reduced by eliminating an unknown parameter to a problem with less number of unknowns and a box constraint for the parameter eliminated. This result was used to obtain direct solutions to linear regression problems with one and two parameters.

To solve approximation problems of arbitrary dimension, we have developed a direct solution procedure that consists of two parts: backward elimination and forward substitution of the unknown parameters. The direct solution is of particular interest in the problems when analytical solutions are desired, whereas the use of iterative algorithms may be inappropriate or inadequate. We have estimated the computational complexity of the procedure, discussed its MATLAB implementation intended to provide exact solutions by symbolic computations, and presented numerical examples.

Possible lines of further investigation can include modification and improvement of the algorithm to reduce computational complexity in solving problems in both exact (rational) and inexact (floating point) form. The development of parallel implementations of the algorithm to speed up calculations is also of interest.

\bibliographystyle{abbrvurl}

\bibliography{Using_parameter_elimination_to_solve_discrete_linear_Chebyshev_approximation_problems}

\end{document}